\documentclass{amsart}

\usepackage[T1]{fontenc}
\usepackage{lmodern}
\usepackage{amsmath,amssymb,amsthm,mathtools}
\usepackage{enumitem}
\usepackage[top=1.6in,bottom=1.6in,left=1.2in,right=1.2in]{geometry}
\usepackage{graphicx,float}
\usepackage{needspace}
\usepackage{tikz}
\usetikzlibrary{arrows.meta}
\usepackage{hyperref}

\newcommand{\NN}{\mathbb{N}}

\newcommand{\law}{\stackrel{\mathrm d}=}

\DeclareMathOperator{\Sym}{Sym}

\setlist[enumerate,1]{label=(\roman*),ref=(\roman*)}

\theoremstyle{plain}
\newtheorem{theorem}{Theorem}[section]

\newtheorem{proposition}[theorem]{Proposition}

\theoremstyle{definition}
\newtheorem{remark}[theorem]{Remark}

\title{Remarks on DAG exchangeability and generalized wreath products}
\author{Paul Jung}
\address{Department of Mathematics, Fordham University,
	407 John Mulcahy Hall, 441 E. Fordham Road,
	Bronx, NY 10458-5165}
\email{paul.jung@fordham.edu}
\date{}
\begin{document}
\begin{abstract}
Motivated by the structure of various standard statistical arrays, Bailey, Praeger, Rowley and Speed (1983) described the automorphism groups of poset block structures as generalized wreath products. We observe that these are exactly the groups used to define DAG exchangeability in Jung, Lee, Staton and Yang (2021), and we restate the representation theorem of the latter work in this language. As a concrete example, we write out the resulting representation for block matrices whose block rows and block columns are separately exchangeable, and whose rows and columns may also be permuted independently inside each block.
\end{abstract}
\maketitle

\section{Introduction}
An infinite array is exchangeable if its joint law is invariant under a prescribed relabeling of its indices. In the familiar case of a matrix $(X_{ij})_{i,j\in\NN}$, separate exchangeability allows us to permute the rows and columns independently. Hoover \cite{Hoover} and Aldous \cite{Aldous81}, independently and through completely different proof methods, gave representations for arrays with such symmetries, and Aldous's Saint-Flour lectures \cite{Aldous85} place these results within the broader theory of exchangeability. The development of graph limits brought renewed attention to exchangeable arrays: sampling from a graphon gives a jointly exchangeable random graph, and every jointly exchangeable simple-graph law is a mixture of such laws \cite{DiaconisJanson,AustinSurvey,LovaszBook}. Later, Austin and Panchenko \cite{AustinPanchenko} considered arrays indexed by paths in rooted trees, where the relabeling at one level is allowed to depend on the labels above it. For each tree, this is the action of an iterated wreath product of symmetric groups.

The work of \cite{JungEtAl} extended this hierarchical picture from trees to finite directed acyclic graphs (DAGs). After its publication, we noticed that its group of allowable relabelings is precisely the generalized wreath product of symmetric groups introduced by Bailey, Praeger, Rowley and Speed \cite{BPRS} for poset block structures. The main purpose of this short note is to make this identification precise (Proposition~\ref{prop:group}) and to restate the representation theorem of \cite{JungEtAl} in the resulting language (Theorem~\ref{thm:rep}).

The concrete question which motivated this note is the following: what is the general form of the law of an exchangeable block matrix, i.e., one in which the block rows and block columns may be permuted separately, as may the rows and columns \emph{inside each block}, with the latter permutations varying from block to block? The representation theorem of \cite{JungEtAl} gives a direct answer, which we write out in \eqref{eq:blockrep} below.

The rest of the note is organized as follows. In Section~\ref{sec:gwp} we describe the generalized wreath product of symmetric groups and show that its action agrees with the DAG automorphisms of \cite{JungEtAl}. In Section~\ref{sec:block} we consider the block matrix example, and in Section~\ref{sec:rep} we state the representation theorem in this notation. Finally, in Section~\ref{sec:stat} we explain how the result relates to the earlier statistical literature and to the theorem of Crane and Towsner \cite{CraneTowsner}.

\section{The generalized wreath product}\label{sec:gwp}
Let $(V,\leq)$ be a finite partially ordered set, and write $\mathcal I=\NN^V$. For $v\in V$, let $\mathcal{A}(v)=\{u\in V:u<v\}$ be its set of strict ancestors. Note that $\mathcal{A}(v)$ is downward closed, since $w<u<v$ implies $w<v$. A tuple $a=(a_u)_{u\in\mathcal{A}(v)}\in\NN^{\mathcal{A}(v)}$ specifies the ancestors' labels; it is the empty tuple when $v$ has no ancestors. For each $v$ and each ancestor tuple $a$, choose a permutation $\sigma_{v,a}\in\Sym(\NN)$ of the possible values of the $v$-coordinate. A fixed family $\sigma=(\sigma_{v,a})$ defines a map $\tau_\sigma$ of $\mathcal I$ in which, at $\alpha\in\mathcal I$, the permutation indexed by $\alpha|_{\mathcal{A}(v)}=(\alpha(u))_{u\in\mathcal{A}(v)}$ acts on $\alpha(v)$:
\begin{equation}\label{eq:action}
 (\tau_\sigma\alpha)(v)=\sigma_{v,\alpha|_{\mathcal{A}(v)}}\bigl(\alpha(v)\bigr),
 \qquad \alpha\in\NN^V.
\end{equation}
Thus, the permutation at $v$ may depend on all of its ancestor labels, but not on any incomparable or descendant label. We denote the set of all maps of the form \eqref{eq:action} by $W(V)$.

Let us check that $W(V)$ is a group of bijections of $\mathcal I$. It contains the identity, obtained by taking every $\sigma_{v,a}$ to be the identity. For two families $\sigma$ and $\sigma'$ we have that
\[
 (\tau_\sigma\tau_{\sigma'}\alpha)(v)
 =\sigma_{v,(\tau_{\sigma'}\alpha)|_{\mathcal{A}(v)}}\Bigl(\sigma'_{v,\alpha|_{\mathcal{A}(v)}}\bigl(\alpha(v)\bigr)\Bigr),
\]
and $(\tau_{\sigma'}\alpha)|_{\mathcal{A}(v)}$ depends only on $\alpha|_{\mathcal{A}(v)}$ since $\mathcal{A}(v)$ is downward closed. Thus $\tau_\sigma\tau_{\sigma'}$ again has the form \eqref{eq:action}. It remains to show that $\tau_\sigma$ is invertible with an inverse of the same form. To see this, fix a \emph{linear extension} $v_1,\ldots,v_m$ of $V$, i.e., a listing in which $u<v$ implies that $u$ is placed before $v$. (Two incomparable ancestors of a vertex may occur in either order, but both precede that vertex.) Given $\beta=\tau_\sigma\alpha$, we recover $\alpha(v_1),\ldots,\alpha(v_m)$ in this order. At step $k=1,\ldots,m$, every ancestor of $v_k$ occurred at an earlier step, so its original label is already known and
\[
 (\tau_\sigma^{-1}\beta)(v_k)=\alpha(v_k)
 =\sigma_{v_k,\alpha|_{\mathcal{A}(v_k)}}^{-1}\bigl(\beta(v_k)\bigr).
\]
By induction on $k$, and again since $\mathcal{A}(v_k)$ is downward closed, the recovered tuple $\alpha|_{\mathcal{A}(v_k)}$ depends only on $\beta|_{\mathcal{A}(v_k)}$. Therefore $\tau_\sigma^{-1}$ also has the form \eqref{eq:action}.

Recall that a \emph{chain} is a poset in which every two vertices are comparable. For a two-vertex chain $u<v$, with $\alpha(u)=i$ and $\alpha(v)=j$, the action \eqref{eq:action} reads $(i,j)\mapsto(\pi(i),\sigma_i(j))$. This is the basic \emph{permutation wreath product}: one permutation acts on the outer label $i$, while the permutation of the inner label $j$ may vary with $i$. Longer chains iterate this action, while incomparable vertices allow separate coordinate permutations.

Bailey, Praeger, Rowley and Speed illustrate the difference between a matrix which is exchangeable with respect to a direct product of symmetric groups and one which is exchangeable with respect to a wreath product of symmetric groups in their Figures~1 and~2, reproduced below in Figure~\ref{fig:bprs-actions}. Let $(G_1,\Delta_1)$ act on row labels $i$ and $(G_2,\Delta_2)$ on column labels $j$. In their Figure~1 (the left panel of Figure~\ref{fig:bprs-actions}) the direct product sends $(i,j)$ to $(g_1(i),g_2(j))$, so the same row permutation $g_1\in G_1$ is used in every column. In their Figure~2 (the right panel) the wreath product sends $(i,j)$ to $(f_j(i),g_2(j))$, where $f:\Delta_2\to G_1$ specifies a row permutation separately for each column.

Thus rows are globally identifiable in the first case, whereas the second case has no globally defined rows; in the terminology of \cite{BPRS}, the column index dominates the row index. In the language of experimental design, rows and columns are crossed in the left panel, while rows are nested within columns in the right panel. The latter is the two-vertex chain action above, with its outer (column) coordinate written second in $(i,j)$.
\begin{figure}[H]
\centering
\begin{minipage}[c]{0.48\linewidth}
\centering
\includegraphics[width=\linewidth,height=1.2in,keepaspectratio]{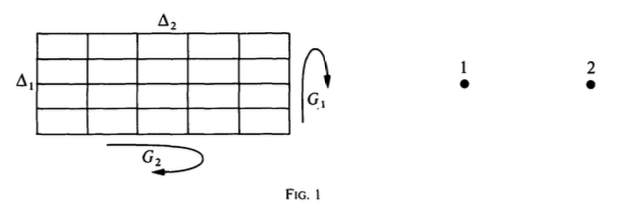}
\end{minipage}\hfill
\begin{minipage}[c]{0.48\linewidth}
\centering
\includegraphics[width=\linewidth,height=1.2in,keepaspectratio]{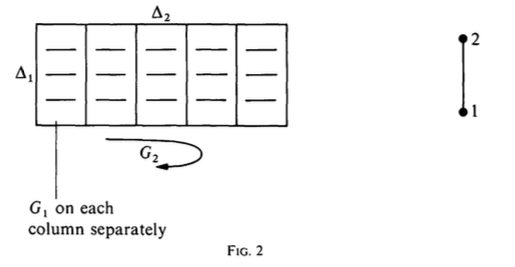}
\end{minipage}
\caption{Figures~1 and~2 of \cite{BPRS} (printed p.~69): the direct product permutes all rows together (left), while the wreath product permits a separate permutation of rows within each column before the columns are permuted (right).}
\label{fig:bprs-actions}
\end{figure}

Bailey, Praeger, Rowley and Speed \cite[Sections 2--3]{BPRS} work with label sets $\Delta_i$ and permutation groups $G_i$ indexed by a poset $(I,\rho)$. With $A(i)$ the set of \emph{dominators} of $i$ and $\Delta^i=\prod_{j\in A(i)}\Delta_j$, each function $f_i:\Delta^i\to G_i$ selects the permutation $f_i(\delta|_{A(i)})$ which acts on $\delta_i$. Their label sets and groups are general; we only need the case where every label set is $\NN$ and every group is the full symmetric group.

\begin{samepage}
Set $I=V$, $\Delta_v=\NN$, $G_v=\Sym(\NN)$, and take $\rho$ opposite to $\leq$, so that their dominators become our ancestors. Then $\Delta^v=\NN^{\mathcal{A}(v)}$ and $f_v(a)=\sigma_{v,a}$, and in their notation (see \cite[Theorem A]{BPRS}) our permutation group on $\mathcal I=\NN^V$ is
\begin{equation}\label{eq:bprsnotation}
 (W(V),\mathcal I)
 =\prod_{(I,\rho)}(G_v,\Delta_v)
 =\prod_{(V,\leq^{\mathrm{op}})}(\Sym(\NN),\NN).
\end{equation}
\end{samepage}
Let $\mathcal D(V)$ denote the set of downward-closed subsets (downsets) of $V$. Recall that a DAG automorphism, which is called a $G$-automorphism in \cite[Definition 1.3]{JungEtAl} (where $G$ denotes the DAG), is a bijection $\tau:\NN^V\to\NN^V$ such that, for every $D\in\mathcal D(V)$ and all $\alpha,\beta\in\NN^V$,
\begin{equation}\label{eq:preserve}
 \alpha|_D=\beta|_D\quad\Longleftrightarrow\quad
 (\tau\alpha)|_D=(\tau\beta)|_D.
\end{equation}
In other words, two indices agree on a downset of coordinates exactly when their images agree there.

\begin{proposition}\label{prop:group}
The group of DAG automorphisms of $\NN^V$ is $W(V)$.
\end{proposition}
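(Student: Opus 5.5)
We prove the two inclusions separately. It is convenient to first record a reformulation of \eqref{eq:preserve}: since $\tau$ is a bijection, condition \eqref{eq:preserve} says exactly that $\tau$ maps each equivalence class of the relation ``$\alpha|_D=\beta|_D$'' onto another such class. Equivalently, there is a well-defined bijection $\tau_D:\NN^D\to\NN^D$ with $(\tau\alpha)|_D=\tau_D(\alpha|_D)$. The forward implication in \eqref{eq:preserve} gives well-definedness, and the backward implication gives injectivity. Surjectivity follows from surjectivity of $\tau$.

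\emph{Inclusion $W(V)\subseteq$ DAG automorphisms.} Let $\tau_\sigma\in W(V)$ and $D\in\mathcal D(V)$. For $v\in D$ we have $\mathcal A(v)\subseteq D$, so by \eqref{eq:action} the value $(\tau_\sigma\alpha)(v)$ depends only on $\alpha|_D$. Hence $\alpha|_D=\beta|_D$ implies $(\tau_\sigma\alpha)|_D=(\tau_\sigma\beta)|_D$. The converse implication follows by applying the same argument to $\tau_\sigma^{-1}$, which was shown above to lie in $W(V)$. Since $\tau_\sigma$ is a bijection, it is a DAG automorphism.

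\emph{Inclusion DAG automorphisms $\subseteq W(V)$.} Let $\tau$ satisfy \eqref{eq:preserve} and fix $v\in V$. Apply the reformulation to the two downsets $D_1=\mathcal A(v)$ and $D_2=\mathcal A(v)\cup\{v\}$; both are downward closed because $\mathcal A(v)$ is. From $D_2$ we get that $(\tau\alpha)(v)$ is a function of $\alpha|_{\mathcal A(v)}$ and $\alpha(v)$ only. For a fixed ancestor tuple $a\in\NN^{\mathcal A(v)}$, define
\[
\sigma_{v,a}(n)=(\tau\alpha)(v)\quad\text{for any }\alpha\text{ with }\alpha|_{\mathcal A(v)}=a,\ \alpha(v)=n .
\]
Then \eqref{eq:action} holds by construction, and it remains to show that each $\sigma_{v,a}$ lies in $\Sym(\NN)$.

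This last point is the main step. Since $\tau_{D_2}$ is a bijection of $\NN^{D_2}$ and $\tau_{D_1}$ is a bijection of $\NN^{D_1}$ compatible with restriction, $\tau_{D_2}$ maps the fiber $\{a\}\times\NN$ into the fiber $\{\tau_{D_1}(a)\}\times\NN$. The same holds for $\tau_{D_2}^{-1}$ with $\tau_{D_1}^{-1}$. Hence $\tau_{D_2}$ maps the fiber over $a$ bijectively onto the fiber over $\tau_{D_1}(a)$. On that fiber, $\tau_{D_2}$ acts in the last coordinate as $n\mapsto\sigma_{v,a}(n)$, so $\sigma_{v,a}$ is a bijection of $\NN$. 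Injectivity of $\sigma_{v,a}$ can also be read off directly from the backward direction of \eqref{eq:preserve} applied to $D_2$. Surjectivity genuinely requires the fiber argument using the inverse. Doing this for every $v$ gives a family $\sigma$ with $\tau=\tau_\sigma$, so $\tau\in W(V)$.
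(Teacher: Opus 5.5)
Your proposal is correct and follows essentially the same route as the paper: the induced bijections $\tau_D$ on $\NN^D$, the first inclusion via $\mathcal A(v)\subseteq D$ together with $\tau_\sigma^{-1}\in W(V)$, and the fiber argument over $\mathcal A(v)\subseteq\mathcal A(v)\cup\{v\}$ showing that each $\sigma_{v,a}$ is a permutation of $\NN$. You only make explicit, via $\tau_{D_2}^{-1}$ and $\tau_{D_1}^{-1}$, why the fiber map is onto, a step the paper states without detail.
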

\begin{proof}
We check both inclusions. Suppose first that $\tau$ has the form \eqref{eq:action}. If $\alpha$ and $\beta$ agree on a downset $D$, then their transformed coordinates in $D$ also agree, since every ancestor of a vertex in $D$ belongs to $D$. The inverse of $\tau$ has the same form, so the converse implication in \eqref{eq:preserve} holds as well.

Now suppose that $\tau$ satisfies \eqref{eq:preserve}. Fix $v\in V$ and put $D_v=\mathcal{A}(v)\cup\{v\}$, which is a downset. By \eqref{eq:preserve}, the $v$-coordinate of $\tau\alpha$ depends only on $\alpha|_{D_v}$. Moreover, for any downset $D$, the map $\tau$ induces a bijection $\tau_D$ on $\NN^D$ with $(\tau\alpha)|_D=\tau_D(\alpha|_D)$: the forward and reverse implications in \eqref{eq:preserve} make the induced map well defined and injective, and surjectivity follows from that of $\tau$ (this induced map is also noted at the beginning of \cite[Section 3]{JungEtAl}). Hence $\tau_{D_v}$ maps the fiber over each $a\in\NN^{\mathcal{A}(v)}$ bijectively onto the fiber over $\tau_{\mathcal{A}(v)}a$. On the remaining coordinate this fiber map must be a permutation $\sigma_{v,a}$ of $\NN$. Doing this for each $v$ gives \eqref{eq:action}, completing the proof.
\end{proof}

\begin{remark}\label{rem:parents}
Example~2.3(d) of \cite{JungEtAl} describes the permutation at a vertex $v$ as depending only on the labels of the parents of $v$ in the DAG. Proposition~\ref{prop:group} shows that the correct description allows the permutation to depend on the labels of all ancestors of $v$. The two descriptions already differ for a three-vertex chain $u<w<v$: the map $(i,j,k)\mapsto(i,j,\sigma_{i,j}(k))$ satisfies \eqref{eq:preserve} for every choice of the permutations $\sigma_{i,j}$, whereas dependence on the parent $w$ alone would force $\sigma_{i,j}$ not to depend on $i$. Dependence on all ancestors is also what one expects from the rooted-tree automorphisms of \cite{AustinPanchenko}. The results of \cite{JungEtAl} are stated and proved in terms of Definition~1.3 there, and so they are unaffected.
\end{remark}

Thus a random array $X=(X_\alpha)_{\alpha\in\NN^V}$ taking values in a standard Borel space is DAG exchangeable precisely when
\begin{equation}\label{eq:invariance}
 (X_\alpha)_{\alpha\in\NN^V}\ \law\
 (X_{\tau\alpha})_{\alpha\in\NN^V},\qquad \tau\in W(V).
\end{equation}
Here and below, equality in distribution refers to the whole array.

\Needspace{2.7in}
\section{A matrix of exchangeable blocks}\label{sec:block}
Let us now consider the block matrix example. Let $V=\{R,C,r,c\}$, with $R$ and $C$ denoting the row and column of a block and $r$ and $c$ the row and column within it. Put
\[
 R<r,\qquad C<r,\qquad R<c,\qquad C<c,
\]
with no other comparabilities (Figure~\ref{fig:blockdag}).
\begin{figure}[H]
\centering
\begin{tikzpicture}[
  >=Stealth,
  vertex/.style={circle,draw,minimum size=6mm,inner sep=0pt},
  edge/.style={->,line width=0.55pt}]
  \node[vertex] (R) at (-1,0.6) {$R$};
  \node[vertex] (C) at (1,0.6) {$C$};
  \node[vertex] (r) at (-1,-0.6) {$r$};
  \node[vertex] (c) at (1,-0.6) {$c$};
  \draw[edge] (R) -- (r);
  \draw[edge] (R) -- (c);
  \draw[edge] (C) -- (r);
  \draw[edge] (C) -- (c);
\end{tikzpicture}
\caption{The block-matrix DAG: each block coordinate $R,C$ precedes both within-block coordinates $r,c$. Redrawn from Figure~2.3 of \cite[Example 2.2(b)]{JungEtAl}.}
\label{fig:blockdag}
\end{figure}
We write $X_{i,j,a,b}$ for entry $(a,b)$ in block $(i,j)$. In this case \eqref{eq:action} reads
\begin{equation}\label{eq:blockaction}
 (i,j,a,b)\longmapsto
 \bigl(\pi(i),\pi'(j),\sigma_{i,j}(a),\kappa_{i,j}(b)\bigr),
\end{equation}
where $\pi,\pi'\in\Sym(\NN)$, and $\sigma_{i,j},\kappa_{i,j}\in\Sym(\NN)$ may be chosen separately for every $(i,j)$ (in the notation of \eqref{eq:action}, $\sigma_{i,j}=\sigma_{r,(i,j)}$ and $\kappa_{i,j}=\sigma_{c,(i,j)}$). In other words, we may permute the block rows and block columns, and also permute the rows and columns inside each block, using a separate pair of permutations for every block. In particular, the inner row label $a$ does not identify a row shared with another block.

It is the last point which explains the two additional edges $C<r$ and $R<c$. If we used only the disjoint chains $R<r$ and $C<c$, a permutation of the inner rows in block row $i$ would have to be used in every block $(i,j)$ of that block row. To see the difference, let $(Z_{i,a})$ be independent, identically distributed, nondegenerate random variables and set $X_{i,j,a,b}=Z_{i,a}$. This array is exchangeable under the two-chain group, but it is not invariant under \eqref{eq:blockaction}: permuting the inner rows in one block destroys the almost-sure agreement with the other blocks in the same block row. On the other hand, using independent $Z_{i,j,a}$ in place of $Z_{i,a}$ gives an array which is invariant under \eqref{eq:blockaction}. This small example shows which dependence the extra edges remove.

There are seven downsets of $V$:
\[
 \varnothing,\ \{R\},\ \{C\},\ \{R,C\},\ \{R,C,r\},\ \{R,C,c\},\ \{R,C,r,c\}.
\]
Theorem~\ref{thm:rep} below therefore gives the following representation, jointly over all entries:
\begin{equation}\label{eq:blockrep}
 (X_{i,j,a,b})_{i,j,a,b\in\NN}\ \law\
 \left(f\bigl(U_\varnothing,U^R_i,U^C_j,U^{RC}_{i,j},
 U^{RCr}_{i,j,a},U^{RCc}_{i,j,b},U^{RCrc}_{i,j,a,b}\bigr)\right)_{i,j,a,b\in\NN}.
\end{equation}
Here all distinct $U$'s are independent uniform random variables. In particular, $U^{RCr}_{i,j,a}$ can produce dependence along inner row $a$ within block $(i,j)$, while no variable indexed by $(i,a)$ alone imposes an alignment of inner rows across different block columns. The seven inputs thus record exactly the levels at which the group permits entries to share randomness.

\section{The representation theorem}\label{sec:rep}
The following is Theorem~1.5 of \cite{JungEtAl}, restated using Proposition~\ref{prop:group}, together with its converse.
\begin{theorem}[Jung--Lee--Staton--Yang \cite{JungEtAl}]\label{thm:rep}
Let $V$ be a finite poset and let $X=(X_\alpha)_{\alpha\in\NN^V}$ take values in a standard Borel space. Then the law of $X$ is invariant under the action on $\NN^V$ of the generalized wreath product of symmetric groups
\[
 W(V)=\prod_{(V,\leq^{\mathrm{op}})}(\Sym(\NN),\NN),
\]
i.e., $(X_\alpha)_{\alpha\in\NN^V}\law(X_{\tau\alpha})_{\alpha\in\NN^V}$ for every $\tau\in W(V)$, if and only if there exist a measurable function $f$ and independent uniform $[0,1]$ random variables $\{U_{D,a}:D\in\mathcal D(V),\ a\in\NN^D\}$ such that
\begin{equation}\label{eq:representation}
 (X_\alpha)_{\alpha\in\NN^V}\ \law\
 \left(f\bigl((U_{D,\alpha|_D})_{D\in\mathcal D(V)}\bigr)\right)_{\alpha\in\NN^V}.
\end{equation}
\end{theorem}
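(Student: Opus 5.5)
The theorem has two directions, and I will treat them separately. The ``only if'' direction is Theorem~1.5 of \cite{JungEtAl}, which is stated for arrays whose law is invariant under all $G$-automorphisms in the sense of \eqref{eq:preserve}. By Proposition~\ref{prop:group}, this is exactly the group $W(V)$. By \eqref{eq:bprsnotation}, $W(V)$ coincides with the generalized wreath product $\prod_{(V,\leq^{\mathrm{op}})}(\Sym(\NN),\NN)$. So invariance under $W(V)$ is precisely the hypothesis of \cite[Theorem~1.5]{JungEtAl}, and its conclusion is \eqref{eq:representation} with the index family $\{U_{D,a}:D\in\mathcal D(V),a\in\NN^D\}$. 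The only check needed here is bookkeeping: the DAG $G$ of \cite{JungEtAl} must induce the poset $V$ (its transitive closure). The downsets in that paper must match $\mathcal D(V)$, and its standard Borel value space must match ours.

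The ``if'' direction I would prove directly. Let $Y_\alpha=f((U_{D,\alpha|_D})_{D})$ and fix $\tau=\tau_\sigma\in W(V)$. Then $Y_{\tau\alpha}=f((U_{D,(\tau\alpha)|_D})_D)$. As noted in the proof of Proposition~\ref{prop:group}, for each downset $D$ the map $\tau$ induces a bijection $\tau_D$ of $\NN^D$ with $(\tau\alpha)|_D=\tau_D(\alpha|_D)$. Define the relabeled family $U'_{D,a}=U_{D,\tau_D(a)}$. For each fixed $D$, $a\mapsto\tau_D(a)$ is a bijection of $\NN^D$, so $(U'_{D,a})_{D,a}$ is again an i.i.d.\ uniform family indexed by the same set. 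Hence $(U'_{D,a})\law(U_{D,a})$ as whole families. Moreover $Y_{\tau\alpha}=f((U'_{D,\alpha|_D})_D)$ is the same measurable functional of $U'$ that $Y_\alpha$ is of $U$. It follows that $(Y_{\tau\alpha})_\alpha\law(Y_\alpha)_\alpha$. Combining this with $X\law Y$ gives $(X_{\tau\alpha})_\alpha\law(X_\alpha)_\alpha$.

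The main point to verify is the existence and well-definedness of $\tau_D$ for every downset $D$, not just for the sets $D_v$. This follows from \eqref{eq:action}: the coordinates of $\tau\alpha$ on $D$ depend only on $\alpha|_D$, because $D$ contains all ancestors of its elements. The map is bijective because the inverse constructed in Section~\ref{sec:gwp} has the same form. No genuine obstacle remains, since the hard analytic content, the existence of the representation, is imported from \cite{JungEtAl}.
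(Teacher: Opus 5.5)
Your proposal is correct and follows the paper's proof essentially step for step. The forward direction is imported from \cite[Theorem~1.5]{JungEtAl} via Proposition~\ref{prop:group}, and the converse relabels the i.i.d.\ uniform family through the induced bijections $\tau_D$ of $\NN^D$ (which you obtain directly from \eqref{eq:action} rather than from \eqref{eq:preserve}, a harmless variation).
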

\begin{proof}
By Proposition~\ref{prop:group}, $W(V)$ is precisely the group of automorphisms used in \cite[Theorem 1.5]{JungEtAl}. That theorem gives the forward implication, with one independent uniform variable for every restricted tuple $a\in\NN^D$, including $U_{\varnothing,\varnothing}$, which is common to all entries.

The converse is also noted in \cite[Section 3]{JungEtAl}; we include the short argument for completeness. Fix $\tau\in W(V)$. By the proof of Proposition~\ref{prop:group}, $\tau$ induces a bijection $\tau_D:\NN^D\to\NN^D$ for each downset $D$, with $(\tau\alpha)|_D=\tau_D(\alpha|_D)$. Since $\{U_{D,\tau_D(a)}\}_{D,a}$ has the same joint law as $\{U_{D,a}\}_{D,a}$, applying $f$ at every $\alpha$ gives \eqref{eq:invariance}, completing the proof.
\end{proof}

\begin{remark}
The argument above proves the reformulation, using the original representation theorem for its forward implication. Theorem~3.2 of \cite{JungEtAl} also gives a simultaneous representation for a collection of arrays indexed by different downsets; Proposition~\ref{prop:group} translates its invariance assumption in the same way.
\end{remark}

\section{Relation to the statistical literature}\label{sec:stat}
Nelder \cite{Nelder} studied crossed and nested factors in randomized experiments. Bailey, Praeger, Rowley and Speed \cite[Theorem B]{BPRS} identified the automorphism group of a poset block structure as a generalized wreath product, and Bailey \cite{Bailey1991,BaileyBook} used such permutation groups to study randomization and strata. Thus the symmetry in the block matrix example was already a natural one in statistics. As far as we know, however, those references did not consider a representation of all laws of infinite arrays which are invariant under this action.

On the level of representations, a theorem of Crane and Towsner \cite{CraneTowsner} implies the single-array representation once the DAG is encoded by equivalence relations; Appendix~A of \cite{JungEtAl} carries out this translation. Their general theorem indexes randomness by antichains of equivalence classes, called ``blurs,'' whereas \eqref{eq:blockrep} displays the seven inputs for block matrices directly. In particular, the latter is an explicit representation in the natural coordinates of the data structure, while the former provides the first general theorem from which such a representation can be deduced. We do not know whether the simultaneous representation of \cite[Theorem 3.2]{JungEtAl} can be deduced in the same way; this was left open in \cite{JungEtAl}.

\end{document}